 \title[Realization and crossed modules]{Realization of Lie algebras and classifying spaces of crossed modules}
\author{Yves F\'elix}
\address{Institut de Math\'ematiques et Physique\\
         Universit\'e Catholique de Louvain-la-Neuve\\
         Louvain-la-Neuve\\
         Belgique}
\email{Yves.Felix@uclouvain.be}
\author{Daniel Tanr\'e}
\address{D\'epartement de Math{\'e}matiques, 
         UMR-CNRS 8524 \\
         Universit\'e de Lille,
         F-59000 Lille}
\email{Daniel.Tanre@univ-lille.fr}
\thanks{The authors are partially supported by the 
MINECO-FEDER grant  MTM2016-78647-P.
The second author is partially supported by the  ANR-11-LABX-0007-01  ``CEMPI''}
\subjclass[2010]{Primary: 55P62, 17B55; Secondary: 55U10}
\keywords{Rational homotopy theory. Realization of Lie algebras. Lie models of simplicial sets. Model category.}
\newcommand{\bd}{\begin{displaymath}}
\newcommand{\ed}{\end{displaymath}}
\newcommand{\ben}{\begin{enumerate}}
\newcommand{\een}{\end{enumerate}}
\newtheorem{theorem}{Theorem}[section]
\newtheorem{theoremv}{Theorem}
 \newtheorem{proposition}[theorem]{Proposition}
 \theoremstyle{definition}
 \newtheorem{definition}[theorem]{Definition}
 \theoremstyle{remark}
 \newtheorem{remark}[theorem]{Remark}
 \numberwithin{equation}{section}
\newcommand{\lasu}{\mathfrak{L}}
\def\L{\mathbb{L}}
\def\hL{\widehat{\mathbb L}}
\def\ov{\overline}
\def\ner{{\mathrm{Ner}}}
\def\cdgl{{\mathbf{cdgl}}}
\def\sset{{\mathbf{Sset}}}
\def\crmod{{\mathbf{CrMod}}}
\def\cC{{\mathcal{C}}}
\def\Hom{{\mathrm{Hom}}}
\def\Der{{\mathrm{Der}}}
\def\Aut{{\mathrm{Aut}}}
\def\ad{{\mathrm{ad}}}
\def\gn{{\mathfrak{n}}}
\def\gm{{\mathfrak{m}}}
\def\gu{{\mathfrak{u}}}
\def\gv{{\mathfrak{v}}}
\def\Q{{\mathbb{Q}}}
\date{\today}
\begin{document}

 \begin{abstract}
The category of complete differential graded Lie algebras provides
nice algebraic models for the rational homotopy types of non-simply connected spaces. 
In particular, there is a realization functor, $\langle -\rangle$, of any complete differential graded Lie algebra as a simplicial set. 
In a previous article, we considered the particular case of a complete  graded
Lie algebra, $L_{0}$,   concentrated in degree 0 and proved  that $\langle L_{0}\rangle$   is isomorphic 
to the usual  bar construction on the Malcev group associated to $L_{0}$.

Here we consider the case of a complete differential graded
Lie algebra, $L=L_{0}\oplus L_{1}$, concentrated in degrees 0 and 1. 
We establish that the category of such two-stage Lie algebras 
is equivalent to explicit subcategories of  crossed modules and
Lie algebra crossed modules, extending the equivalence between pronilpotent Lie algebras and Malcev groups. 
In particular, there is a  crossed module
$\mathcal{C}(L)$ associated to $L$. We prove that $\mathcal{C}(L)$ 
is isomorphic to the Whitehead crossed module associated
to the simplicial pair  $(\langle L\rangle, \langle L_{0}\rangle)$.
Our main result is the identification of $\langle L\rangle$ with the classifying space of $\mathcal{C}(L)$.
 \end{abstract}
 
 \maketitle

\section*{Introduction}

 In this text, we pursue the study of the rational homotopy type of spaces with  models in the category $\cdgl$ of
 complete differential graded Lie algebras,
 as  developed in  \cite{book}. 
 We emphasize that in this approach, there are no requirements concerning simply connectivity  
 or  nilpotency. In particular, to any finite simplicial complex is associated a cdgl $M_X$ whose homology in degree $0$    is the  Malcev completion of $\pi_{1}(X)$ (\cite[Theorem 10.5]{book}).
 
 \smallskip
 One of the main tools in this theory  is a cosimplicial cdgl
 $\lasu_{\bullet}=\{\lasu_{n}\}_{n\geq 0}$, where $\lasu_0$ is the free Lie algebra on a Maurer Cartan element in degree -1, and $\lasu_1$ is the Lawrence-Sullivan interval (see below for more details). 
  This cosimplicial cdgl plays a role similar to the simplicial algebra of PL-forms on $\underline{\Delta}^{\bullet}$.
  It enables us to construct   a  realization functor 
 from the category of complete differential graded Lie algebras to the category of simplicial sets,
 $\langle -\rangle\colon \cdgl \to \sset$, defined by
 $$\langle L\rangle_{\bullet} := \Hom_{{\cdgl}} (\mathfrak{L}_{\bullet}, L).$$
 
 \medskip
 If a Lie algebra $L$ is concentrated in degree 0, we proved in \cite[Theorem 0.1]{bar} that 
 its realization  $\langle L\rangle$ is \emph{isomorphic} to the usual bar construction 
 on the group $\mathrm{exp}\, L$, 
 constructed on the set $L$  with the Baker-Campbell-Hausdorff product.

 \medskip
 Here we consider the next step: {$L$ is a connected cdgl with non-trivial homology only in degrees 0 and 1.}
 Geometrically, this  corresponds to the notion of homotopy 2-types  and, by analogy, 
 a  connected cdgl $L$ such that $H_{*}L=H_{0}L\oplus H_{1}L$ is called a \emph{2-type cdgl.}
  First of all, if $L=L_{\geq 0} $ and $H_{\geq 2} L=0$, then the Lie subalgebra 
$I=L_{\geq 2}\oplus dL_{2}$ is an ideal  because 
 if $a\in L_0$ and $b\in L_2$, then $da= 0$ and $[a, d(b)]= d[a,b]$.
 Moreover  $I$ is acyclic, and the quotient map is a quasi-isomorphism,
 $$\varphi \colon (L,d)
  \xrightarrow{\simeq} 
   (L/I, \overline{d}).$$
Therefore, since the realization functor $\langle-\rangle$ preserves 
quasi-isomorphisms of connected cdgl's (\cite[Corollary 8.2 and Remark 8.6]{book}), we get a 
weak homotopy equivalence
 $$\langle\varphi\rangle \colon \langle L,d\rangle 
 \xrightarrow{\simeq} 
 \langle L/I, \overline{d}\rangle.$$ 
 
 We have thus reduced the problem to considering only cdgl's $L$ of the form $L= L_0\oplus L_1$ and
 denote by $\cdgl_{\leq 1}$ the corresponding subcategory of $\cdgl$.
 We associate to such $L$ a natural crossed module $\cC(L)$ and denote by $\crmod$ the category of crossed modules.
Our main result, which extends \cite[Theorem 0.1]{bar},  can be formulated  as follows.
 
 \begin{theoremv}\label{thm:realizationandcrossed}
 If $L$ is a complete differential graded Lie algebra such that $L=L_{0}\oplus L_{1}$,
 then its geometric realization $\langle L\rangle$ is naturally isomorphic to the classifying simplicial set $B\cC(L)$;
 i.e., the diagram
$$\xymatrix{
\cdgl_{\leq 1}\ar[rr]^-{\langle -\rangle}\ar[d]_{\cC}&&
\sset\\
\crmod
\ar[rru]_-{B}
}$$
commutes up to natural isomorphisms.
 \end{theoremv}

  This theorem shows that  the functor $\langle -\rangle$ 
  generalizes many classical constructions.
  
  \medskip
   Geometrically,  crossed modules appear in  the work of Whitehead (\cite{MR30760}).
   If $(X,A)$ is a pair of topological spaces, based in $A$, Whitehead proved that the boundary map
   $d\colon \pi_{2}(X,A)\to \pi_{1}(A)$, together with the action of $\pi_{1}(A)$ on $\pi_{2}(X,A)$, 
   defines a  crossed module.
  Then, in \cite{MR33519},
  MacLane and Whitehead showed that the spaces $X$ with $\pi_{q}(X)=0$, $q\geq 2$,
  are determined by the  crossed module
  of the pair $(X,X_{1})$, where $X_{1}$ is the 1-dimensional skeleton of $X$.
  For any cdgl $L=L_{0}\oplus L_{1}$, the geometric realization $\langle L\rangle$ is determined 
  by the  crossed module associated to the pair
  $(\langle L\rangle, \langle L_{0}\rangle)$.
  Our second main result identifies this crossed module with $\cC(L)$.

\begin{theoremv}\label{thm:whiteheadCL}
The Whitehead  crossed module associated to the simplicial pair
$(\langle L\rangle, \langle L_{0}\rangle)$
is isomorphic to the  crossed module $\cC(L)$ introduced above.
\end{theoremv}
 
 \medskip
In short, these two theorems unify the geometric realizations of complete differential graded Lie algebras
of the form $L=L_{0}\oplus L_{1}$
and of crossed modules. 
In the last section, 
we extend the correspondence between Malcev groups and pronilpotent Lie algebras to crossed modules.
 We  introduce the categories of Malcev crossed modules and  of pronilpotent Lie algebra crossed modules  
 and  prove an isomorphism of categories.
 
 \begin{theoremv}\label{thm:equivalence}
 The three following categories are isomorphic:
 \begin{enumerate}
 \item the category of pronilpotent differential graded Lie algebras of the form $L=L_{0}\oplus L_{1}$,
 \item the category of pronilpotent Lie algebra crossed modules,
 \item the category of Malcev crossed modules.
 \end{enumerate}
 Moreover, the equivalence between (1) and (3) is given by the functor $\cC$. 
 \end{theoremv}

As a next step for the future, we can consider a connected cdgl $L$ such that $H_{\geq n+1}L=0$
for some $n\geq 1$. 
 Using the ideal $J=L_{\geq n+1}\oplus dL_{n+1}$,
 the same argument used above gives a weak homotopy equivalence
 $$\langle\varphi\rangle \colon \langle L,d\rangle  
 \xrightarrow{\simeq}
  \langle L/J, \overline{d}\rangle.$$
 We conjecture that the differential $d$ defines an $n$-{cat}-group structure on $\cC(L)$ 
 (in the sense of Loday in \cite{Loday}) and that the geometric realization
  $\langle L/J, \overline{d}\rangle$ is isomorphic to the realization of this $n$-cat-group.
 
\medskip
Our program is carried out in Sections 1-7 below, whose headings are self-explanatory.

\tableofcontents
 
 \section*{Conventions and notation} 

In a graded Lie algebra $L$, the group of elements of degree~$i$ is denoted by $L_{i}$. 
A Lie algebra differential decreases the degree by 1, i.e., $dL_{i}\subset L_{i-1}$. 
If $x\in L$, we denote by $\mathrm{ad}_{x}$ the Lie derivation of $L$ defined by 
$\mathrm{ad}_{x}(y)=[x,y]$.

If there is no ambiguity, the product of two elements $m$, $m'$ of a group $M$ is denoted $mm'$. 
Sometimes, if several laws are involved, we can use some specific notation, such as $m\perp m'$ or $m*m'$,
to avoid confusion.
An action of a group $N$ on a group $M$ is always a left action and is denoted by $(n,m)\mapsto {}^nm$. 
We denote then by
$M\rtimes N$ the semi-direct product whose multiplication law is  defined by
$$(m,n)(m',n')=(m\,^{n}m',nn').$$

  \section{Background on Lie models} \label{sec:recall}
    
  A \emph{complete differential graded Lie algebra} (henceforth cdgl) 
   is a differential graded Lie algebra $L$ equipped with a decreasing filtration of differential Lie ideals, such that
   $F^1=L$, $[F^pL,F^{q}L]\subset F^{p+q}L$ and 
   $$L= \varprojlim_n L/F^n L.$$
   If no filtration is specified, it is understood that we consider the lower central series.
   
   \medskip
 Let  $V=\oplus_{i\in \mathbb Z}V_{i}$ be a rational graded vector space.
We denote by  $\L(V)$   the free graded Lie algebra on $V$, and
 by $\mathbb L^{\geq n} (V)$ the ideal of $\L(V)$ generated by the brackets of length greater than or equal to $n$.
 The \emph{completion of  $\mathbb L(V)$}
 is the inverse limit
 $$\widehat{\mathbb L}(V) = \varprojlim_n \mathbb L(V)/ \mathbb L^{\geq n} (V).$$
  This is a cdgl for the filtration given by the ideals 
  $G^n = \mbox{ker}\, (\widehat{\mathbb L}(V)\to \mathbb L(V)/\mathbb L^{>n}(V))$. 
  The correspondence $V\to \widehat{\mathbb L}(V)$ gives a left adjoint to the forgetful functor to graded rational
  vector spaces (\cite[Proposition 3.10]{book}).
  We call $\widehat{\mathbb L}(V)$ the \emph{free complete graded Lie algebra on $V$.}
  
  \medskip
  If  $\theta$ is a derivation of degree 0 on a cgl $L$, the exponential map $e^{\theta}$ is
  a cgl automorphism of $L$ defined by
  $$e^{\theta}=\sum_{i\geq 0}\frac{\theta^i}{i!}.$$
  In particular, for any $x\in L_{0}$, $e^{\ad_{x}}$ is a cgl automorphism of $L$.
  Therefore, in any cgl $L$,  the sub Lie algebra $L_{0}$ admits a group structure whose multiplication 
 law $*$ is given by the Baker-Campbell-Hausdorff product 
 (\cite[Ch.II.\S 6.Proposition 4]{MR979493},  \cite[\S 3.4]{Reut})
 and characterized by
 $$e^{\ad_{x*y}}=e^{\ad_{x}}\circ e^{\ad_{y}}.$$

 \medskip
 Now we recall  the first properties of the cosimplicial cdgl $\mathfrak{L}_{\bullet}$  (\cite[Chapter 6]{book}).
Denote as usual by $\underline{\Delta}^n$ the simplicial set in which $\underline{\Delta}^n_p$ is the set of $p+1$-uples of integers $(j_0, \dots , j_p)$ such that $0\leq j_0\leq \dots \leq j_p\leq n$. We also denote by 
$\Delta^n$ the simplicial complex formed by the non-empty subsets of $\{0,\dots,n\}$.
The subcomplex ${\dot{\Delta}}^n$ of $\Delta^n$ 
is the  simplicial complex containing the proper non-empty subsets of $\{0,\dots,n\}$.

\medskip
Finally   $s^{-1}C_*\underline{\Delta}^n$ denotes the desuspension of the simplicial chain complex 
on $\underline{\Delta}^n$ and $s^{-1}C_*{\Delta}^n$ the desuspension of the complex of simplicial chains on $\Delta^n$,
which is isomorphic to $s^{-1}N_*\underline{\Delta}^n$,
the  complex of non-degenerate chains on $\underline{\Delta}^n$.
Then, as a graded Lie algebra (without differential), we set
$$\lasu_n = \widehat{\mathbb L}(s^{-1}C_*{\Delta}^n).$$
In other words,   $\mathfrak{L}_n$ is the  free complete graded Lie algebra 
on elements 
$a_{i_0\dots i_k}$  of degree $\vert a_{i_0\dots i_k}\vert = k-1$, for all $0\leq i_0<\dots <i_k\leq n$.
For instance, we have $|a_{i}|=-1$ and $|a_{i_{0}i_{1}}|=0$.

\medskip
The family $\underline{\Delta}^{\bullet} =\{\underline{\Delta}^{n}\}_{ n\geq 0}$ is a cosimplicial object in the category of simplicial sets. 
It follows that the family $s^{-1}N_*\underline{\Delta}^{\bullet}$  is a cosimplicial object in the category of chain complexes. 
The identification $s^{-1}C_*{\Delta}^n\cong s^{-1}N_*\underline{\Delta}^n$ makes 
$s^{-1}C_*{\Delta}^n$ a cosimplicial object in the category of chain complexes.
The extension of the cofaces and codegeneracies as morphisms of Lie algebras gives morphisms of 
complete graded Lie algebras $\delta^i \colon \lasu_n\to \lasu_{n+1} $ and $\sigma^i \colon \lasu_n \to \lasu_{n-1}$.
More precisely, we have
$$\delta^i(a_{j_0 \dots  j_p}) = a_{r_0 \dots  r_p} \vspace{5mm}\mbox{ , with } r_k = 
\left\{
\begin{array}{ll}
j_k & \mbox{if } j_{k}<i,\\
j_{k}+1 & \mbox{if }j_{k}\geq i,
\end{array}\right.$$
and 
$$\sigma^i(a_{j_0 \dots  j_p}) = a_{r_0 \dots  r_p} \vspace{5mm}\mbox{ , with } r_k = \left\{\begin{array}{ll}
j_k & \mbox{if } j_{k}\leq i,\\
j_{k}-1 & \mbox{if }j_{k}> i,
\end{array}\right.$$
if $r_0<\dots <  r_p$. Otherwise,  $ \sigma^i(a_{j_0 \dots  j_p})=0$.

\begin{proposition}{\cite[Theorem 6.1]{book}}\label{def:lasu}
Each $\lasu_{n}$ can be endowed with a differential $d$ satisfying the following properties.
\begin{enumerate}[(i)]
\item The linear part $d_{1}$ of $d$ is given by
$$d_{1}a_{i_{0}\dots i_{p}}=\sum_{j=0}^p(-1)^ja_{i_{0}\dots \hat{i}_{j}\dots p}.$$
\item The generators $a_{i}$ are Maurer-Cartan elements; i.e.,
$da_{i}= -1/2 [a_{i},a_{i}]$.
\item The cofaces $\delta^i$ and the codegeneracies $\sigma^i$ are cdgl morphisms.
\item For $n\geq 2$,  
$$da_{0\dots n}= [{a_{0}},a_{0\dots n}]+ \Phi,$$
with $\Phi\in \hL(s^{-1}C_*{\dot{\Delta}}^n)$. 

\end{enumerate}
Thus, in particular, the family $\lasu_{\bullet}$ is a cosimplicial cdgl.
\end{proposition}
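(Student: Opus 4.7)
The strategy is induction on $n$. For $n=0$, set $\lasu_0 = \hL(a_0)$ with $|a_0|=-1$ and $da_0 = -\frac{1}{2}[a_0,a_0]$; properties (i)--(iv) are trivial or vacuous. For $n=1$, take the Lawrence-Sullivan interval, where the generator $a_{01}$ of degree $0$ is equipped with $da_{01} = [a_0,a_{01}] + \sum_{k\geq 0}\frac{B_k}{k!}\mathrm{ad}^k_{a_{01}}(a_1-a_0)$ (with $B_k$ the Bernoulli numbers); one checks directly that $d^2=0$, that (i)--(iv) hold, and that the resulting cofaces and codegeneracies between $\lasu_0$ and $\lasu_1$ are cdgl morphisms.

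For the inductive step, assume the cosimplicial cdgl structure has been defined on $\lasu_m$ for all $m<n$. Consider the sub-complete Lie algebra $\hL(s^{-1}C_*\dot\Delta^n)\subset \lasu_n$, generated by every $a_{i_0\dots i_k}$ other than $a_{0\dots n}$. Each such generator is the image of the top generator of some $\lasu_m$, $m<n$, under a suitable composition of cofaces; requirement (iii) then \emph{forces} a differential on this subalgebra. One checks that it is well defined (independent of the factorization, using cosimplicial identities) and that it squares to zero by the inductive hypothesis. The only remaining freedom is therefore the value of $da_{0\dots n}$.

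I then look for $\Phi\in \hL(s^{-1}C_*\dot\Delta^n)$ of total degree $n-2$ such that $da_{0\dots n} := [a_0,a_{0\dots n}]+\Phi$ satisfies $d^2 a_{0\dots n}=0$ and has linear part $d_1 a_{0\dots n}=\sum_j (-1)^j a_{0\dots \hat\jmath \dots n}$. Applying $d$ to this ansatz and using that $a_0$ is Maurer-Cartan, the condition $d^2 a_{0\dots n}=0$ becomes an equation of the form $d\Phi + [a_0,\Phi] = \Psi$, where $\Psi$ is expressible in terms of the already-constructed differential on the boundary. The solvability is established by successive approximation in the filtration by bracket length on $\hL(s^{-1}C_*\dot\Delta^n)$: the associated graded of the ``twisted'' differential $d+\mathrm{ad}_{a_0}$ is, in linear order, essentially the boundary of the chain complex $s^{-1}C_*\Delta^n$, which is acyclic because $\Delta^n$ is contractible. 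At each bracket length the obstruction is a cycle in this acyclic complex and can be lifted, yielding $\Phi$ as the limit of the iterative process in the complete Lie algebra.

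The main obstacle is to guarantee that the constructed $\Phi$ is \emph{simultaneously} compatible with all codegeneracies $\sigma^i$, i.e., that $\sigma^i$ commutes with $d$. Two approaches are possible: either interleave the codegeneracy compatibility into the bracket-length induction (using that $\sigma^i a_{0\dots n}=0$ by the explicit formula, so one must check $\sigma^i\Phi = 0$ as an additional constraint on each approximation), or appeal to a universal/cofibrant-replacement argument in a model structure on cosimplicial cdgl's to produce the entire cosimplicial object in one step. Once this compatibility is secured, properties (i), (ii) and (iv) follow immediately from the inductive construction of $\Phi$ and the formula for $da_{0\dots n}$.
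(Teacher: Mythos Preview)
The paper does not contain a proof of this proposition: it is stated as a citation of \cite[Theorem 6.1]{book} and no argument is given in the present text. There is therefore nothing in the paper to compare your proposal against.

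That said, your sketch follows the expected strategy of the cited source: induction on $n$, the Lawrence--Sullivan interval as base case, forcing the differential on $\hL(s^{-1}C_*\dot\Delta^n)$ via the cofaces, and then solving for $\Phi$ by an obstruction argument exploiting the acyclicity of $\Delta^n$. Your identification of the codegeneracy compatibility as the delicate point is accurate; in the reference this is handled essentially by your first option, building the compatibility into the inductive construction rather than by an abstract model-categorical argument. The sketch is sound as an outline, though the step ``the associated graded of $d+\mathrm{ad}_{a_0}$ is essentially the boundary of $s^{-1}C_*\Delta^n$'' hides some work: one must control the quadratic and higher terms carefully to ensure the obstruction at each bracket length really is a cycle for the linear boundary operator, and that the constraint $\sigma^i\Phi=0$ can be imposed simultaneously at every stage without spoiling the previous approximations.
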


Let us specify  the cdgl  $\mathfrak{L}_{n}$ in low dimensions.
\begin{enumerate}
\item[$\bullet$]
 $\mathfrak{L}_0=(\mathbb L(a_0),d)$ is the free  Lie algebra  on a Maurer-Cartan element $a_0$.

\item[$\bullet$] $\mathfrak{L}_1 = (\widehat{\mathbb L}(a_{0}, a_{1},a_{01}),d)$ is the Lawrence-Sullivan interval (see \cite{LS}) with 
$$da_{01}=[a_{01},a_{1}]+\frac{\mathrm{ad}_{a_{01}}}
{e^{ \mathrm{ad}_{a_{01}}}-1}(a_{1}-a_{0}).$$

\item[$\bullet$]  $\mathfrak{L}_2 = (\widehat{\mathbb L}(a_0,a_1,a_2, a_{01}, a_{02}, a_{12}, a_{012}), d)$
is a model of the triangle (see \cite[Proposition 5.14]{book}) with the differential 
\begin{equation}\label{equa:ddeux}
d(a_{012} ) = a_{01}* a_{12}*a_{02}^{-1}-[a_{0},a_{012}].
\end{equation}
\end{enumerate}

The cosimplicial cdgl $\lasu_{\bullet}$ leads naturally to the definition of cdgl models for any simplicial set and 
to a geometric realization for any given cdgl, see \cite[Chapter 7]{book}. 
For our purpose, we only need the realization of a  cdgl $L$, defined as the simplicial set
$$\langle L\rangle =\Hom_{\cdgl}(\lasu_{\bullet},L),$$
which satisfies properties of the classical Quillen realization. For instance, for
any $n\geq 1$,  we have $\pi_{n}\langle L\rangle =H_{n-1}L$, where 
the group law of $H_{0}L$ is the BCH product (see \cite[Section 4.2]{book} or \cite[\S II.6.4]{MR979493}). 

 \section{Crossed modules and cdgl's}\label{sec:crosslie}

 For general background on crossed modules, we refer the reader to the historical papers of Whitehead
 (\cite{MR30760}, \cite{MR33519}) or to more modern presentations, such as \cite{MR1691707}, \cite{MR2841564} 
 or \cite{Loday}.
 We recall only the basics we need.
 
 \begin{definition}\label{def:crossedmod}
A crossed module $\cC=(d\colon M\to N)$ is a morphism of groups $d$ together with an action of $N$ on $M$, 
given by group automorphisms $n\mapsto (m\mapsto {}^nm)$ 
satisfying two conditions:
 \begin{enumerate}
 \item for all $m\in M$ and $n\in N$, $d({}^nm) = n  d(m)  n^{-1}$,
 \item for all $m\in M$, $m'\in M$, ${}^{d(m)}m' = m   m'  m^{-1}$.
 \end{enumerate}
 \end{definition}
 
 If  the group $N$ acts on itself by conjugation, the first property means that $d$ is compatible with the 
 $N$-action. It also implies that the group $d(M)$ is a normal subgroup of $N$ 
 and that ker$\, d$ is a sub $N$-module of $M$. 
 
On the other hand, we remark  that if $d(m)=1$,  the second property 
 implies $mm'=m'm$ which means that   $\mathrm{ker}\,d$ is included in the center of $M$. The same property   shows 
  that Im$\,d$ acts trivially on ker$\,d$ and induces thus an action of coker$\, d$ on ker$d$.   

\vspace{5mm}
 
Now  let $L= L_0\oplus L_1$ be a cdgl. 
In what follows $L_{0}$ is always considered as a group 
equipped with the BCH product denoted by $*$. 
We will prove that $d \colon L_1\to L_0$ is a crossed module. 
The first step   consists in defining a group structure on $L_1$. 
This construction was originally carried out in \cite[Definition 6.14]{book}.

\begin{proposition}\label{prop:perp}
For any   cdgl $(L,d)$ such that $L= L_0\oplus L_1$, $L_1$ admits a natural product~$\perp$ for which
the differential $d\colon (L_{1},\perp)\to (L_{0},*)$ is a group morphism.
Moreover, $a\perp b=a+b$ if $a$ and $b$ are cycles.
\end{proposition}

\begin{proof} 
The different possibilities for a definition of this law are described in \cite[Section 6.5]{book}. 
We recall here the construction for the convenience of the reader, beginning with the ``universal'' example, 
the cdgl $L'= \widehat{\mathbb L}(u_1, u_2, du_1, du_2)$, with $u_i$ in degree $1$. 
Since $HL'= 0$ there is an element $\omega$ in $L'_1$ such that 
\begin{equation}\label{equa:petitperp}
d\omega = du_1 * du_2.
\end{equation}
 Of course such an element is not unique. If $\omega'$ is another element satisfying (\ref{equa:petitperp}),  the difference $\omega-\omega'$ is a boundary 
since $H_{\geq 1}L'= 0$. This shows that the class of $\omega$ is well defined in  the cdgl quotient 
$(L'/(L'_{\geq 2}\oplus dL_2'), \overline{d})$. 
We denote this class by $u_1 \perp u_2$. By construction, it verifies
$\ov{d}(u_{1}\perp u_{2})=du_{1}*du_{2}$.

Among all the different possible choices for $\omega$,  one starts  with the  Baker-Campbell-Hausdorff series 
for $du_1 * du_2$. Replacing in each term  one and only one  $du_i$ by $u_i$ we get an element $\omega$ with $d\omega= du_{1}*du_{2}$. 
This gives,
\begin{equation}\label{FF}
\omega= u_{1}+u_{2} + \frac{1}{2}[u_{1}, du_{2}] 
+\frac{1}{12}[du_{1},[du_{1},u_{2}]]- \frac{1}{12}[du_{2},[du_{1},u_{2}]] +
\dots\end{equation}

Now, let $L$ be a cdgl with $L= L_0\oplus L_1$,   $e_1, e_2\in L_1$, and 
 $f \colon L'\to L$ the unique cdgl map sending $u_i$ to $e_i$. 
Therefore  the element $e_1\perp e_2 := f(u_1\perp u_2)$ is a well defined element in $L_1$. 
By construction, if $e_{1}$ and $e_{2}$ are cycles, using the image of the formula (\ref{FF}) in $L$, we have $e_{1}\perp e_{2}=e_{1}+e_{2}$.

For the associativity of $\perp$, we consider $L''= \widehat{\mathbb L}(u_1,u_2,u_3, du_1, du_2, du_3)$
and observe that in $L''_1/dL''_2$
 we have $(u_1\perp u_2)\perp u_3 = u_1\perp (u_2\perp u_3)$   
 because both have the same boundary. The same is thus true in $L_1$.
\end{proof}

 \vspace{3mm} With this group structure on $L_1$ we can now prove that  $L= L_0\oplus L_1$ is a crossed module.
  
 \begin{proposition}\label{prop:crossLie}
 Let $(L,d)$ be a connected complete differential graded  Lie algebra such that $L=L_0\oplus L_1$.
  Then,
 $d\colon (L_1,\perp)\to (L_0,*)$
 is a crossed module.  
 \end{proposition}

  \begin{proof}
  Recall from \cite[Definition 12.40]{book} that the group $L_{0}$ acts on $L_1$ by
 $${}^xz= e^{\mathrm{ad}_x}(z)\,, \quad \text{for all } x\in L_0, z\in L_{1}.$$
 From \cite[Corollary 4.12]{book}) it follows that,
 for any $x\in L_{0}$, $y\in L_{0}$, $z\in L_1$, we have
$$ {}^{(x*y)}z= e^{\mathrm{ad}_{x*y}}(z)= e^{\mathrm{ad}_x}(e^{\mathrm{ad}_y}z) = {}^x({}^yz).$$
To prove that the function $y\mapsto {}^xy$ is a group homomorphism, as in Proposition~\ref{prop:perp},
we consider a universal example. 
Let $E= \widehat{\mathbb L} (x,z,t, dz,dt)$ with $x$ in degree $0$, $z$ and $t$ in degree $1$, and $dx= 0$. 
Since the injection $\mathbb L(x)\to E$ is a quasi-isomorphism, we have $H_{\geq 1}(E)= 0$.
Observe  that in $E/ (E_{\geq 2}\oplus dE_2)$ we have
$$
\renewcommand{\arraystretch}{1.1}
\begin{array}{lcl}
d(^x(z\perp t))&=& e^{\mathrm{ad}_{x}}(d(z\perp t)) = 
e^{\mathrm{ad}_{x}}(dz*dt)\\
&=&
x*dz*dt*x^{-1}=x*dz*x^{-1}*x*dt*x^{-1}\\
&=&
e^{\mathrm{ad}_{x}}(dz)*e^{\mathrm{ad}_{x}}(dt)\\
&=&
d(e^{\mathrm{ad}_{x}}z)*d(e^{\mathrm{ad}_{x}}t)= d({}^xz\perp {}^xt).
\end{array}
\renewcommand{\arraystretch}{1}$$
Thus in $E_1/dE_2$, we get 
$$  {}^x(z\perp t) = {}^xz\perp  {}^x t.$$
The same is therefore also true in   $L_{1}$.

\medskip
As $x$ is a cycle, by \cite[Propositions 4.10 and 4.13]{book} we have
$$d({}^xz)= e^{\mathrm{ad}_x}(dz)= x*dz*x^{-1},$$
and Property (1) of Definition~\ref{def:crossedmod} is satisfied.
For Property (2), we use   once again  the  universal example $L'=\hL(u_1,u_2, du_1, du_2)$ 
already considered  in the proof of Proposition \ref{prop:perp}. Since in $L'_1/dL'_2$ we have
$$d({}^{du_1} u_2) = du_1*du_2*du_1^{-1}= d(u_1\perp u_2\perp u_1^{-1}),$$
 we deduce that
$${}^{du_1}u_2 = u_1\perp u_2\perp u_1^{-1},$$
and thus the same is true in $L_1$.
 \end{proof}

\vspace{3mm}\begin{remark} 
 By Proposition~\ref{prop:perp}, under the hypotheses of Proposition~\ref{prop:crossLie}, 
 we deduce that the group structures $\perp$ and $+$ coincide on $H_1L= \mathrm{ker} \,d$. 
 \end{remark}
 
 We  have thus defined a functor $\cC\colon \cdgl_{\leq 1} \to \crmod$.
 %
 \section{The crossed module of a realization and Theorem~\ref{thm:whiteheadCL}}\label{sec:crossedreal}
 
 In this section, in the case $L=L_{0}\oplus L_{1}$, 
 we establish the isomorphism between $\cC(L)$ and the Whitehead crossed module of
 $(\langle L\rangle, \langle L_{0}\rangle)$.
 
 \begin{proof}[Proof of Theorem~\ref{thm:whiteheadCL}]
The realization $\langle L\rangle =\Hom_{\cdgl}(\lasu_{\bullet},L)$ of a cdgl  $L= L_0\oplus L_1$ is a Kan complex 
(\cite[Proposition 7.13]{book}). 
We first compute $\pi_{1}(\langle L_{0}\rangle)$ and $\pi_{2}(\langle L\rangle,\langle L_{0}\rangle)$, and for that, we use  
the homotopy relation introduced in \cite[\S 3]{May}.
 
 \medskip
 Since $\lasu_{1}= (\widehat{\mathbb L}(a_{0}, a_{1},a_{01}),d)$, the map $f\mapsto f(a_{01})$ induces an isomorphism 
 of sets
 $$\langle L_0\rangle_1 = \Hom_{\cdgl} ({\lasu}_1, L_0) \xrightarrow{\cong} L_0.$$
 Since $\partial_{i}f=0$, for $i=0,\,1$, each element of $L_{0}$ defines an element of $\pi_{1}(\langle L_{0}\rangle)$.
 Now, two such 1-simplices, $g$ and $f$,
are homotopic  in $\langle L_0\rangle $ if 
  there exists a map $h \colon {\lasu}_2 \to L_0$ such that 
$\partial_{1}h=g$, $\partial_{2}h=f$ and $\partial_{0}h=0$. 
The simplex $h$ is called a homotopy from $f$ to $g$.

In the particular case $g=0$, from the simplicial structure of the realization, we get
$h(a_{02})= h(a_{12})= 0$ and $h(a_{01} ) = f(a_{01})$. 
Since $h(a_{012})= 0$, we have an equivalence: 
 $$f\sim 0 \hspace{3mm}\Leftrightarrow \hspace{3mm} 0 = dh(a_{012}) = h(a_{01}*a_{12}*a_{02}^{-1})= f(a_{01}).$$
 Therefore $\pi_1\langle L_0\rangle  = L_0$.
 
 \medskip
 To compute the relative homotopy group $\pi_2(\langle L\rangle , \langle L_0\rangle )$, we consider the set
  $$K=\left\{f\in \langle L\rangle_{2}=\Hom_{\cdgl} ({\lasu}_2, L)
\mid \partial_{i}f= 0 \text{ for } i=1,\,2 \text{ and } \partial_{0}f \in \langle L_{0}\rangle\right\}.$$
If $f\in K$, we have 
$\partial_{0}f(a_{01})=f(\delta^0(a_{01}))=f(a_{12})=f(da_{012})=df(a_{012})$ 
and thus the correspondence $K\to L_{1}$ which maps $f$ to $f(a_{012})$ is an isomorphism.
  By \cite[Definitions 3.3 and 3.6]{May}, two simplices, $f$ and $g$, of $K$ are homotopic rel $\langle L_0\rangle$ if 
 $\partial_{0}f\sim \partial_{0}g$ in $\langle L_{0}\rangle$ by a homotopy $h$,
  and there exists a 3-simplex
 $\omega\colon \lasu_{3}\to L$ such that $\partial_{0}\omega =h$, $\partial_{2}\omega=f$, $\partial_{3}\omega=g$
 and 
 $\partial_{1}\omega=0$.

 For getting an expression of these conditions at the level of cdgl's, we recall 
 (\cite[Proposition 6.16]{book}) the differential $d$
 of $\lasu_{3}$, which uses  the operation $\perp$ 
 introduced in the proof of Proposition~\ref{prop:perp}:
 \begin{equation}\label{equa:tetrahedron}
 d(a_{0123}) = e^{\ad_{a_{01}}} a_{123} - (a_{012}\perp a_{023}\perp a_{013}^{-1}).
 \end{equation}
 From $L_{\geq 2}=0$, we deduce $\omega(a_{0123})=0$.
  From the definition of $K$, we get
  $\omega(a_{123})=\partial_{0}\omega(a_{012})=h(a_{012})=0$, since $ L_{0}$ has no element of degree~1.
  We also have $\omega(a_{012})=\partial_{3} \omega(a_{012})=g(a_{012})$,
  $\omega(a_{013})=\partial_{2}\omega(a_{012})=f(a_{012})$
  and $\omega(a_{023})=\partial_{1}\omega(a_{012})=0$. 
  Thus, by applying $\omega$ to both sides of  (\ref{equa:tetrahedron}), we obtain:
$$  0=0-g(a_{012})\perp 0\perp f(a_{012})^{-1},$$
i.e., $0=g(a_{012})\perp f(a_{012})^{-1}$.
This implies $0=dg(a_{012})* df(a_{012})^{-1}$ and $df(a_{012})=dg(a_{012})$.

\smallskip
 It remains to describe $g(a_{012})\perp f(a_{012})^{-1}$. 
 From the compatibility of the differential with Lie bracket and the fact that $L_{1}$ is an abelian Lie algebra,
 we get $[g(a_{012}),dg(a_{012})]= -\frac{1}{2} d[g(a_{012}),g(a_{012})]= 0$. 
 In the BCH product  $df(a_{012})*dg(a_{012})$, all terms except the linear ones contain a bracket 
 $[df(a_{012}),dg(a_{012})]$ which becomes
 $[df(a_{012}),g(a_{012})]=[dg(a_{012}),g(a_{012})]=0$ in the formula~\eqref{FF}.
 We thus obtain
 $$g\perp f^{-1} =  g-f.$$
  We have proven $\pi_2(\langle L\rangle , \langle L_0\rangle ) \cong L_1$ and $\pi_{1}(\langle L_{0}\rangle)\cong L_{0}$. 
We also showed that the  connecting map
 $\partial\colon \pi_2(\langle L\rangle , \langle L_0\rangle )\to \pi_{1}(\langle L_{0}\rangle)$,
given by $[f]\mapsto [\partial_{0}f]$,  corresponds to $df(a_{012})$ in the previous isomorphisms.

\medskip
Consider now the action of $\pi_{1}(\langle L_{0}\rangle)=L_{0}$ on
$\pi_{2}(\langle L\rangle,\langle L_{0}\rangle)=L_{1}$.
Let $a\in L_{0}$, $b\in L_{1}$ and ${}^ab$ the element of $L_{1}$ corresponding to this action.
Recall (\cite[Lemma 4.23]{book}) that $y=e^{\ad_{a}}b$ is also an element of $L_{1}$ such that $dy=a*db*a^{-1}$.
Both constructions,
${}^ab$ and $e^{\ad_{a}}b$,
are natural, so that to prove ${}^ab=e^{\ad_{a}}b$,
we have only to prove it for the cdgl $L''$ quotient of $L'=\hL(a,u,du)$, with $\deg u=1$, by the ideal
$L'_{\geq 2}\oplus dL'_{2}$.
The required identification follows from $d({}^au)=d(e^{\ad_{a}}u)$ and the injectivity of 
$d\colon L''_{1}\to L''_{0}$.
We have thus recovered the crossed module $\cC(L)$.
 \end{proof}
 
\section{The classifying space of a crossed module}\label{sec:classcross}

By definition, the classifying space of a crossed module $\cC$ is the classifying space of the nerve of the categorical group associated to $\cC$.  Let us specify this association.
 
 \medskip 
Recall that a categorical group is a group object in the category of groups (see \cite[Section 1.1]{Loday}),
$$\xymatrix{
G\ar@<1ex>[r]^-{s}
\ar[r]_-{t}& N},$$
where $N$ is a subgroup of $G$, $s$ and $t$ are homomorphisms such that
$s\vert_N=t\vert_N=\mbox{id}_{N}$ and
$[\ker s, \ker t]=1$. 

\medskip
In \cite{Loday}, J. L.~Loday defines a categorical group associated to  a crossed module 
$\cC=(d\colon (M,\perp) \to (N,*))$
as follows:
\begin{itemize}
\item $G=M\rtimes N$ is the product $M\times N$ with the semi-direct product given by the   action of $N$ on $M$.
Thus, the product of $(m',n')$ and $(m,n)$ in $G$ is 
\begin{equation*}\label{equa:product}
(m',n')\bullet (m,n)=(m'\perp {}^{n'} m, n' *n).
\end{equation*}
\item An element $(m,n)$ of $G$ has for source and target, respectively,
\begin{equation*}\label{equa:sandt}
s(m,n)=dm\ast n\quad \text{and} \quad t(m,n)=n.
\end{equation*}
\end{itemize}

\vspace{2mm} Thus, the group $N$ is interpreted as the group of objects viewed in $G$ as $\{1\}\times N$.
The group $G=M\rtimes N$  is the group of arrows with the
morphisms $s$ and $t$ giving the source and the target. 
Two elements $(m',n')$ and $(m,n)$ are composable if 
$$n'=t(m',n')=s(m,n)=dm*n.$$
In this case the composition is defined by
\begin{equation*}\label{equa:compo}
(m',n')\circ (m,n) = (m'\perp m,n).
\end{equation*}

We deduce easily from Property (1)  of Definition~\ref{def:crossedmod} that $s$ and $t$ are group homomorphisms.
We  also verify that the source of a composite is the source of the first factor and the target is the target of the second factor:
$$
\begin{array}{lcl}
s(m'\perp m,n)&=&
d(m'\perp m)*n=dm'*dm*n=dm'*n'\\
&=& s(m',n'),\\
t(m'\perp m,n)&=&n=t(m,n).
\end{array}
$$
Finally,  composition is a group homomorphism,  see \cite[Lemma 2.2]{Loday}.

\vspace{5mm} The usual nerve of a category is a simplicial set. When the category is a categorical group, we obtain naturally a simplicial group.  
Let us describe the nerve of the categorical group 
associated to a crossed module $\cC=(d\colon (M,\perp) \to (N,*))$. 
We have
$$\xymatrix{
\ner_{1}=M\rtimes N\ar@<1ex>[r]^-{d_{1}}
\ar[r]_-{d_{0}}& \ner_{0}=N},$$
with $d_{0}(m,n)=t(m,n)=n$, $d_{1}(m,n)=s(m,n)=dm*n$
and  $s_{0}\colon \ner_{0}\to \ner_{1}$ is the canonical injection $N\to M\rtimes N$.

\medskip
An element of $\ner_{k}$ is a sequence $(m_{i},n_{i})_{1\leq i\leq k}$ such that
$$n_{i}=t(m_{i},n_{i})=s(m_{i-1},n_{i-1})=dm_{i-1}*n_{i-1}.$$
 As  the $n_{i}$, for $i\geq 2$,  are determined by $n_1$ and the family
$(m_{i})_{1\leq i\leq k}$, the sequence $(m_i,n_i)_{i\leq k} $ can be identified with the sequence 
$$(m_k, m_{k-1}, \dots , m_1, n_1)\in M^k\times N.$$ 
In particular,  
\begin{equation}\label{equa:nerve}
\ner_{k}=M^k\times N.
\end{equation}
Each $\ner_k$ is a group,  
the multiplication being given  component wise. With the identification (\ref{equa:nerve}),
 this product is given by 
\begin{equation*}\label{equa:productnerve}
((m_{i})_{1\leq i\leq k},n)\bullet ((m'_{i})_{1\leq i\leq k},n')=
((m_{i}\perp ^{d(\perp_{j=1}^{i-1} m_{j})*n}m'_{i})_{1\leq i\leq k}, n*n').
\end{equation*}
The boundary and degeneracy maps of $\ner_*$ are morphisms of groups defined as usual by:
$$\renewcommand{\arraystretch}{1.1}
\left\{
\begin{array}{ll}
d_0(m_k, \dots , m_1,n) = (m_k, \dots, m_2, d(m_1)*n),&\\
d_i(m_k, \dots , m_1,n) = (m_k, \dots , m_{i+1}\perp m_{i}, \dots , m_1,n),
&0<i<k, \\
 d_k(m_k, \dots , m_1, n) = (m_{k-1}, \dots , m_1, n),&\\
s_{i}(m_k, \dots , m_1, n)=(m_{k},\dots,m_{i},1,m_{i-1},\dots,m_{1},n),
&0\leq i\leq k.
\end{array}\right.
\renewcommand{\arraystretch}{1}$$
The identity $e_k\in \ner_k$ is the element $(1, \dots ,1,1)$.

\medskip
Recall from \cite[Definition 3.20]{MR279808} or \cite[Page 255]{MR1711612} 
the classifying functor $\ov{W}$ which goes 
from the category of simplicial groups to the category of reduced simplicial sets.
The classifying space $B\cC$ of the crossed module $\cC$ is the space obtained by composing 
$\ner_*$ with  $\ov{W}$:
$$B\cC=\ov{W}(\ner_*).$$

By definition of $\ov{W}$, we have
$$
(B\cC)_{k}
=
\{(h_{k-1},\dots,h_{0})\mid h_{i}\in \ner_{i}\}.
$$
The boundaries and degeneracies are given by
\begin{equation*}
\renewcommand{\arraystretch}{1.1} %
\left\{\begin{array}{rcl}
d_{0}(h_{k-1},\dots,h_{0})
&=&
(h_{k-2},\dots,h_{0}),\\
d_{i}(h_{k-1},\dots,h_{0})
&=&
(d_{i-1}h_{k-1},\dots, d_{0}h_{k-i}\bullet h_{k-i-1},h_{k-i-2},\dots,h_{0}),
\;0<i<k,\\
d_{k}(h_{k-1},\dots,h_{0})
&=&
(d_{k-1}h_{k-1},\dots,d_{1}h_{1}),\\
s_{0}(h_{k-1},\dots,h_{0})
&=&
(1,h_{k-1},\dots,h_{0}),\\
s_{i}(h_{k-1},\dots,h_{0})
&=&
(s_{i-1}h_{k-1},\dots,s_{0}h_{k-i},1,h_{k-i-1},\dots,h_{0}),
\;0< i\leq k.
\end{array}\right.
\renewcommand{\arraystretch}{1}
\end{equation*}

 In particular, in low dimensions, we have
$B\cC_0= 1$,
$B\cC_1= N$,
$B\cC_2 = (M\rtimes N)\times N$,
$B\cC_3 = (M^2\rtimes N)\times (M\rtimes N)\times N$.

\section{The classifying space functor $\ov{W}$ and twisting functions}

Let $A_*$ be a simplicial set. By \cite[Corollary 27.2]{May}, there is a bijective correspondence between morphisms of simplicial sets $\varphi \colon A_* \to \ov{W}\circ \ner_*=B\cC$ and twisting functions 
$$ \tau=\{\tau_{k}\colon A_k\to \ner_{k-1}\}_{k\geq 1}.$$
Recall that (\cite[Definition 18.3]{May}) a twisting function $\tau$ is a family of maps $\tau_k \colon A_k\to \ner_{k-1}$ satisfying
 the following properties for $x\in A_k$:
$$\renewcommand{\arraystretch}{1.1}
\left\{\begin{array}{rcl}
d_{0}\tau x
&=&
\tau d_{1}x\bullet (\tau d_{0}x)^{-1},\\
d_{i}\tau x
&=&
\tau d_{i+1}x, \;i>0,\\
s_{i}\tau x
&=&
\tau s_{i+1}x,\; i\geq 0,\\
\tau s_{0}x
&=&
e_k\in \ner_k.
\end{array}\right.
\renewcommand{\arraystretch}{1}$$

\medskip
The simplicial map $\varphi_{k}\colon A_k \to (B\cC)_{k}$
associated to the twisting function $\tau$  
is given by
\begin{eqnarray}
\label{F1}\varphi_{k} x=(\tau x, \tau d_{0}x,\dots, \tau d_{0}^{k-1}x).
\end{eqnarray}

Conversely (\cite[Page 88]{May}) the twisting function $\tau$ associated to a simplicial morphism $\varphi \colon A_*\to \ov{W}(\ner_*)$ is defined by
$$\tau = \tau(\ner_*) \circ \varphi,$$
where $\tau(\ner_*)$ is the twisting function associated to the identity on $\ov{W}(\ner_*)$,
$$\tau(\ner_*) \colon \ov{W}(\ner_*)_k \to \ner_{k-1}\,, $$
defined by $$ \tau(\ner_*)(g_{n-1}, \dots , g_0) = g_{n-1}.$$

\section{Proof of  Theorem~\ref{thm:realizationandcrossed}}\label{sec:proofthm}

First we compute the simplicial set 
$\langle L\rangle_{\bullet} = \Hom_{\cdgl}({\mathfrak L}_{\bullet}, L)$ in the case $L= L_0\oplus L_1$.
By $L_{\geq 2}=0$ and \cite[Corollary 6.5]{book}, we have  isomorphisms
$$\Hom_{\cdgl}(\lasu_{k},L)\cong\Hom_{\cdgl}( (\widehat{\mathbb L}((s^{-1}\Delta^k)_{\leq 2}),d),L)
\cong
\Hom_{\cdgl}( (\widehat{\mathbb L}((s^{-1}\Lambda_{0}^k)_{\leq 2}),d),L).
$$
Since any morphism of codomain $L$ vanishes on elements of negative degree, we can quotient 
 by the differential ideal generated by the generators of degree~-1.
 This gives as free cgl
 $$
 \overline{\lasu}_k = (\widehat{\mathbb L}(a_{ij}, a_{0st}),d) \hspace{2mm}\mbox{with }0\leq i<j\leq k\hspace{2mm}\mbox{and } 0<s<t\leq k.
 $$
Finally, in view of the differential in $\lasu_{2}$, recalled in \eqref{equa:ddeux}, 
the differential of $\ov{\lasu}_{k}$ satisfies
$$da_{ij}=0\text{ and }
da_{0st}=a_{0s}*a_{st}*a_{0t}^{-1}.$$
In the rest of this text, we will use that for all k, there exists an isomorphism
$$\langle L\rangle_{k}=\Hom_{\cdgl}(\lasu_{k},L)=\Hom_{\cdgl}(\ov{\lasu}_{k},L).$$

\begin{proposition}
\label{XY}
If $L= L_0\oplus L_1$, then the morphism
$$\Psi \colon \Hom_{\cdgl}(\lasu_k, L) \to L_0^k\times L_1^{\frac{k(k-1)}{2}}$$
given by $\Psi (f) = ((f(a_{r\, r+1}))_{0\leq r< k}, (f(a_{r,r+1,s}))_{r+1<s\leq k})$
is an isomorphism.
\end{proposition}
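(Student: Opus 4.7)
The plan is to leverage the previous proposition, which already identifies $\hom_{\cdgl}(\lasu_k,L)$ with $L_0^k\times L_1^{k(k-1)/2}$ via $\Phi$: it then suffices to verify that $\Psi$ is injective and surjective. I would argue both directions by reconstructing a morphism $f$ from its values on the nearest-neighbour generators $a_{r,r+1}$ and $a_{r,r+1,s}$, by inductions parallel to the ones used for $\Phi$, but run along the "nearest-neighbour'' face instead of the "star at $0$'' face of $\Delta^k$.

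For injectivity, the triangle differential $da_{r,r+1,s}=a_{r,r+1}*a_{r+1,s}*a_{r,s}^{-1}$ rearranges to
$$f(a_{r,s})=\bigl(df(a_{r,r+1,s})\bigr)^{-1}*f(a_{r,r+1})*f(a_{r+1,s}),$$
so an induction on $s-r\geq 2$ expresses every $f(a_{r,s})$ in terms of the $\Psi$-data. In degree~$1$, the defining relation $a_{rst}\perp a_{rtv}\perp a_{rsv}^{-1}={}^{a_{rs}}a_{stv}$ of $I_k$, specialised with $(r,s,t,v)=(i,i+1,j,k)$, gives
$$f(a_{i,j,k})=f(a_{i,i+1,j})^{-1}\perp{}^{f(a_{i,i+1})}f(a_{i+1,j,k})\perp f(a_{i,i+1,k}),$$
and an induction on $j-i-1$ determines all $f(a_{i,j,k})$. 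Together, these two recursions prove that $\Psi$ is injective.

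For surjectivity, given data $((\alpha_r)_{0\leq r<k},(\beta_{r,r+1,s})_{r+1<s\leq k})$, I run the same recursions in the forward direction with $\alpha$'s and $\beta$'s in place of $f$-values, producing candidate elements $\widetilde f(a_{ij})\in L_0$ and $\widetilde f(a_{ijk})\in L_1$ for all $i<j$ and $i<j<k$. The sub-family indexed by pairs $(0,i)$ and triples $(0,i,j)$ then, by the previous proposition, determines a unique cdgl morphism $f\colon\lasu_k\to L$, and reading the reconstruction formulas in reverse confirms that $f(a_{r,r+1})=\alpha_r$ and $f(a_{r,r+1,s})=\beta_{r,r+1,s}$, as required.

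The main obstacle is the consistency check needed to invoke the previous proposition in the surjectivity step: the candidate $\widetilde f(a_{0,i,j})$ must satisfy the triangle coboundary identity $d\widetilde f(a_{0,i,j})=\widetilde f(a_{0,i})*\widetilde f(a_{i,j})*\widetilde f(a_{0,j})^{-1}$, and the ideal relations $a_{rst}\perp a_{rtv}\perp a_{rsv}^{-1}={}^{a_{rs}}a_{stv}$ must hold for \emph{every} quadruple $(r,s,t,v)$, not only the ones picked by the recursion. Verifying this compatibility is exactly where the full crossed-module structure of $d\colon(L_1,\perp)\to(L_0,*)$ established in Section~\ref{sec:crosslie} is indispensable: Property~(1) governs how $d$ intertwines the conjugation action, and Property~(2) supplies the key identity ${}^{\mu(m)}m'=m\perp m'\perp m^{-1}$ that allows the tetrahedral relations to be re-associated across different base vertices.
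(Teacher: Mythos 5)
Your proposal is correct and follows essentially the same route as the paper: both reconstruct every $f(a_{i,j})$ by induction on the gap $j-i$ from the differential relation $da_{r,r+1,s}=a_{r,r+1}*a_{r+1,s}*a_{r,s}^{-1}$, and every $f(a_{i,j,r})$ from the tetrahedral relation defining $I_k$, with surjectivity obtained by running the same recursion forwards. The consistency check you flag at the end is also left at the level of a remark in the paper (which simply states that ``the same construction process'' gives surjectivity), so your treatment matches the paper's in both substance and level of detail.
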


\begin{proof} For the sake of simplicity write for $i<j$, $a_{ji}= a_{ij}^{-1}$, and for $0\leq i<j<r\leq k$,
$$\renewcommand{\arraystretch}{1.2}
\begin{array}{ll}
a_{irj}= a_{ijr}^{-1},&\\ 
a_{rij}= {}^{a_{ri}}a_{ijr} ={}^{a_{ir}^{-1}}a_{ijr}\mbox{},
&a_{jir}={}^{a_{ji}} a_{irj}={}^{a_{ij}^{-1}}a_{ijr}^{-1},\\ 
a_{jri}= {}^{a_{ji}} a_{ijr} ={}^{a_{ij}^{-1}}a_{ijr},\hspace{5mm}\mbox{}
&a_{rji}= {}^{a_{ri}} a_{irj}={}^{a_{ir}^{-1}}a_{ijr}^{-1}.
\end{array}
\renewcommand{\arraystretch}{1}$$
With this notation, when the integers $i,j,r$ are all different from each other and between $0$ and $k$, we have
$$da_{ijr}= a_{ij}*a_{jr}*a_{ri}.$$
Suppose that   the elements $f(a_{r,r+1})$ and  $f(a_{r,r+1,t})$, with $r+1<t$,  are defined.
Then the other elements, $f(a_{r, r+s})$ and $f(a_{r, r+s, t})$ with $r+s<t$, can be derived by induction on $s$ 
from the  formulas
$$f(a_{r, r+s+1})= df(a_{r,r+1, r+s+1})^{-1} * f(a_{r,r+1})* f(a_{r+1, r+s+1})$$
and
$$f(a_{r, r+s+1,t}) = {}^{f(a_{r,r+1})}
\left( f(a_{r+1, r, r+s+1}) \perp f(a_{r+1, r+s+1, t})\perp f(a_{r+1, t, r})\, \right).$$
This shows that $\Psi$ is injective. The same construction process shows that $\Psi$ is also surjective.
\end{proof}

\vspace{5mm}
The   isomorphism of our main theorem is based on a family $\tau$ of  maps\\
$$ \tau_k \colon \Hom_{\cdgl}(\lasu_k, L) \to \ner_{k-1}, k\geq 1,$$
defined by 
$$\tau_{k}f=(m_{k-1},\dots,m_{1},n)\in M^{k-1}\times N,$$
with $n=f(a_{01})$, $m_{1}=(f(a_{012}))^{-1}$ and $m_{i}=(f(a_{01(i+1)}))^{-1}\perp f(a_{01i})$, for $i\geq 2$.
In  low dimensions, this gives:
$$\renewcommand{\arraystretch}{1.1}
\left\{
\begin{array}{l}
 \tau_{1} f=f(a_{01})\in N ,\\
 \tau_{2} f=(f(a_{012})^{-1}, f(a_{01}))\in M\times N,\\
 \tau_{3} f= (f(a_{013})^{-1}\perp f(a_{012}),f(a_{012})^{-1}, f(a_{01}))\in M^2\times N.
 \end{array}\right.
\renewcommand{\arraystretch}{1}
$$

\begin{proposition} The family $\tau$ is a twisting function.\end{proposition}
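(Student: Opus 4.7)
The plan is to verify the four twisting-function axioms directly, using the explicit description of the cofaces $\delta^i$ and codegeneracies $\sigma^i$ on the generators $a_{0,1,r}$ of $\lasu_k$. The key observation is that $\tau_k f$ depends only on the values $f(a_{01})$ and $f(a_{01r})$ for $2\le r\le k$, so each axiom reduces to tracking how these particular values transform.

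The two axioms $d_i\,\tau f = \tau\,d_{i+1}f$ for $i\ge 1$ and $s_i\,\tau f = \tau\,s_{i+1}f$ for $i\ge 0$ are formal. For $i\ge 1$, the coface $\delta^{i+1}$ leaves the first two indices fixed and shifts the third index only when it is $\ge i+1$, so the entries of $\tau_{k-1}(d_{i+1}f)$ are obtained from those of $\tau_k f$ by keeping positions $j<i$, merging positions $i$ and $i+1$ via the cancelation $f(a_{01,i+1})^{-1}\perp f(a_{01,i+1})=1$, and shifting the remaining positions down by one, which matches exactly the formula given in Section~\ref{sec:classcross} for $d_i$ on $\ner_{k-2}$. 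A parallel analysis using $\sigma^{i+1}$ produces an identity entry in the correct slot, matching $s_i$. For the last axiom, $\sigma^0$ sends every generator $a_{01\dots}$ to $0$ because the index $0$ is forced to repeat, so all $M$-entries and the $N$-entry of $\tau_{k+1}(s_0 f)$ are trivial, hence $\tau s_0 f = e_k$.

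The substantial case is $d_0\,\tau f = \tau\,d_1 f \bullet (\tau\,d_0 f)^{-1}$. On the $N$-component, the triangle relation $df(a_{012}) = f(a_{01})\ast f(a_{12})\ast f(a_{02})^{-1}$ gives $dm_1\ast n = f(a_{02})\ast f(a_{12})^{-1}$, which agrees with the $N$-part of $\tau d_1 f\bullet (\tau d_0 f)^{-1}$. For the $M$-components, I would expand the semidirect product in formula~(\ref{equa:productnerve}), first inverting $\tau d_0 f$ inductively, and recognize the resulting $j$-th component as $m_{j+1}=f(a_{01,j+3})^{-1}\perp f(a_{01,j+2})$ by iteratively applying the crossed-module identity
\[
a_{0st}\perp a_{0tv}\perp a_{0sv}^{-1} = {}^{a_{0s}}a_{stv},
\]
which holds on the values $f(a_{0st})$ because of the defining ideal $I_k$ of $\overline{\lasu_k}$ and of the factorization of $f$ through that quotient.

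The main obstacle is precisely this last step. The inverse in the semidirect product $M^{k-2}\rtimes N$ is defined recursively, and the $j$-th $M$-component of $\tau d_1 f\bullet (\tau d_0 f)^{-1}$ involves conjugations by the boundary of a $\perp$-product of all lower components, so equality with $d_0\,\tau_k f$ does not hold entry-wise but only after a telescoping of these conjugations. I would proceed by induction on $j$, converting each ${}^n m$-term into a $\perp$-product via the crossed-module identity above, and using the identities ${}^{n\ast n'}m={}^n({}^{n'}m)$ and $d({}^n m)=n\ast dm\ast n^{-1}$ established in the proof of Proposition~\ref{prop:crossLie} to carry out the bookkeeping.
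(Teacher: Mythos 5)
Your overall plan coincides with the paper's: the axioms for $d_i$ ($i\ge 1$), $s_i$ and $s_0$ are exactly the index bookkeeping you describe (the cancelation $f(a_{01(i+1)})^{-1}\perp f(a_{01(i+1)})=1$ merging adjacent entries), and your $N$-component check for the $d_0$ axiom is the same computation. Where you diverge is in organizing the one hard step, and the paper's organization is worth adopting because it removes the obstacle you flag. Instead of computing $\tau d_1f\bullet(\tau d_0f)^{-1}$ --- which forces you through the recursive inverse in $M^{k-2}\rtimes N$ and the entrywise telescoping of conjugations --- the paper verifies the equivalent identity $d_0\tau_kf\bullet\tau_{k-1}d_0f=\tau_{k-1}d_1f$, so no inverse is ever formed. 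It then argues by induction on $k$: since $d_0$, $\tau$ and the product $\bullet$ all interact with the truncation dropping the leftmost entry in a way controlled by the inductive hypothesis, only the leftmost $M$-component needs to be checked. That single component is
$$f(a_{01k})^{-1}\perp f(a_{01(k-1)})\perp{}^{\gamma}\bigl(f(a_{12k})^{-1}\perp f(a_{12(k-1))})\bigr),\qquad \gamma=f(a_{0(k-1)})*f(a_{1(k-1)})^{-1},$$
and rather than manipulating it with the cocycle identity from $I_k$ as you propose, the paper simply computes its boundary in the universal quotient $\overline{\lasu_k}$ and finds it equals $d\bigl(a_{02k}^{-1}\perp a_{02(k-1)}\bigr)=a_{0k}*a_{2k}^{-1}*a_{2(k-1)}*a_{0(k-1)}^{-1}$; since $d$ determines degree-one elements of the universal example (the same ``equal boundaries imply equal'' principle used to define $\perp$ in Proposition~\ref{prop:perp}), the two sides agree, and hence agree in $L$. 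Your route via repeated application of ${}^{a_{0s}}a_{stv}=a_{0st}\perp a_{0tv}\perp a_{0sv}^{-1}$ should also close, since that identity is itself proved by the same boundary comparison, but you would be re-deriving by hand what the boundary computation gives in one line; as written, the telescoping you defer is the entire content of the proposition and is not yet done.
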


\begin{proof}
Observe that $m_{i+1}\perp m_{i}=f(a_{01(i+2)})^{-1} \perp f(a_{01i})$. Thus, the index $i+1$ disappears in the
expression of $d_{i}\tau_{k}f$ and we get $d_{i}\tau_{k}f=\tau_{k-1}d_{i+1}f$ for $0<i<k-1$.
A similar argument gives also the result for $d_{k-1}$.
We have reduced the problem to proving the more subtle equality 
involving $d_{0}$.
We use an induction, supposing the result is true for $\tau_{j}$, $j<k$, and considering $\tau_{k}$.
Due to the inductive step, we can concentrate the computations on the left hand factor. 
From the definitions, we have
$$\begin{array}{lcl}
\tau_{k-1} d_{1}f
&=&
(f(a_{02k}))^{-1}\perp f(a_{02(k-1)}),\dots,f(a_{02})),\\
\tau_{k-1} d_{0}f
&=&
((f(a_{12k})^{-1}\perp f(a_{12(k-1)}),\dots,f(a_{12})),\\
d_{0}\tau_{k} f
&=&
((f(a_{01k})^{-1}\perp f(a_{01(k-1)}),\dots,(df(a_{012}))^{-1}*f(a_{01}) ).
\end{array}$$
We  determine the product of the two last terms,
$$
d_{0}\tau_{k}f\bullet \tau_{k-1}d_{0}f=
(f(a_{01k})^{-1}\perp f(a_{01(k-1)})
\perp\;
 ^{\gamma} (f(a_{12k})^{-1}\perp f(a_{12(k-1)})),\dots),
$$
where $\gamma=dm_{k-2}\ast dm_{k-1}\ast\dots \ast dm_{1}\ast n= f(a_{0(k-1)})*(f(a_{1(k-1)}))^{-1}$.
To obtain the equality with $\tau_{k-1}d_{1}f$, we consider the following computation  in $\overline{\lasu}_k$:
\begin{eqnarray*}
d(a_{01k}^{-1}\perp a_{01(k-1)}
\perp\;
 ^{a_{0(k-1)}*a_{1(k-1)}^{-1}} (a_{12k}^{-1}\perp a_{12(k-1}))
 &=&
 a_{0k}*a_{2k}^{-1}
*a_{2(k-1)}*a_{0(k-1)}^{-1}\\
&=&
d(a_{02k}^{-1}\perp a_{02(k-1)}).
\end{eqnarray*}
Similar computations give the corresponding equalities for degeneracy maps.
\end{proof}

\medskip
Denote by $\varphi$ the  morphism of simplicial sets induced by the previous twisting function~$\tau$,
$$\varphi \colon \Hom_{\cdgl}({\mathfrak L}, L) \to B\cC(L).$$
The following result finishes the proof of the Theorem.

\begin{proposition} 
The morphism $\varphi$ is an isomorphism of simplicial sets.
\end{proposition}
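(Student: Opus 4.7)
The plan is to verify bijectivity of $\varphi_k$ for each $k\geq 0$ by matching, via Proposition~\ref{XY}, the parameterization of $\hom_{\cdgl}(\lasu_k, L)$ by the values of $f$ on certain generators, with the natural product decomposition of $(B\cC)_k=\prod_{i=0}^{k-1}\ner_i=M^{k(k-1)/2}\times N^k$. Since $\varphi$ has already been shown to be a morphism of simplicial sets, bijectivity in each dimension is all that remains to check.

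First, I would unpack formula~(\ref{F1}): $\varphi_k(f)=(h_{k-1},\dots,h_0)$ with $h_j=\tau_{j+1}(d_0^{k-1-j}f)\in\ner_j$. Since the coface $\delta^0$ shifts every index up by one, one has $(d_0^{k-1-j}f)(a_{01\cdots p})=f(a_{r,r+1,\dots,r+p})$ for $r=k-1-j$. Substituting this into the definition of $\tau$, the component $h_j$ is built only from $f(a_{r,r+1})$ (contributing the $N$-factor) and from the $j$ generators $f(a_{r,r+1,s})$ with $r+1<s\leq k$ (contributing the $M^j$-factor). As $j$ runs from $0$ to $k-1$, this picks out exactly the generators that appear in $\Psi$ in Proposition~\ref{XY}, and no two components of $\varphi_k(f)$ refer to the same generator.

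Next, I would verify that the passage from the $\Psi$-coordinates of $f$ to the coordinates of $\varphi_k(f)$ is invertible. The $N$-coordinate of $h_j$ is simply $f(a_{r,r+1})$. The $M^j$-coordinates form the triangular system
$$m_1=f(a_{r,r+1,r+2})^{-1},\qquad m_i=f(a_{r,r+1,r+i+1})^{-1}\perp f(a_{r,r+1,r+i})\quad (2\leq i\leq j),$$
which, since $(L_1,\perp)$ is a group by Proposition~\ref{prop:perp}, admits the unique solution $f(a_{r,r+1,r+2})=m_1^{-1}$ and $f(a_{r,r+1,r+i+1})=f(a_{r,r+1,r+i})\perp m_i^{-1}$. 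Composing this inversion with $\Psi^{-1}$ produces an explicit two-sided inverse for $\varphi_k$.

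The argument is essentially bookkeeping; the only delicate point is handling the $\perp$-recursion and the inverses carefully, but once the triangular structure is observed, invertibility follows from the group law on $L_1$. No simplicial identities need to be re-verified at this stage, since they were already established in the proof that $\tau$ is a twisting function.
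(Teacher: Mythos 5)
Your argument is correct and is essentially the paper's own proof: both reduce bijectivity of $\varphi_k$ to Proposition~\ref{XY} by using $d_0^{\ell}f=f(\delta^0)^{\ell}$ to see that the components of $\varphi_k f$ are assembled precisely from the data $f(a_{r,r+1})$ and $f(a_{r,r+1,s})$, $r+1<s\leq k$. Your explicit triangular inversion of the $\perp$-recursion spells out a step the paper leaves implicit, but the route is the same.
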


\begin{proof} Recall from (\ref{F1}) that 
$$\varphi_kf = (\tau f, \tau d_0f, \dots , \tau d_0^{k-1}f).$$ 
On the other hand, using $d_{0}f=f\delta^0$, we get
$\tau d_0f = (m'_{k-2}, \dots , m'_1, n')$, with $n'=f(a_{12})$, $m'_1= f(a_{123})^{-1}$ and for 
$i>1$, $m'_i = f(a_{12(i+2)})^{-1}\perp f(a_{12(i+1)})$. 
By iteration from $(d_{0})^\ell f=f(\delta^0)^\ell$, we deduce that
the image of $\varphi_k$ is the linear subspace generated by the elements 
$f(a_{r,r+1})$, for $0\leq r<k$, and  $f(a_{r, r+1,s})$, for $r+1<s\leq k$. 
The result follows thus from Proposition \ref{XY}.
\end{proof}
\section{Malcev crossed modules and Theorem~\ref{thm:equivalence}}\label{sec:Malcev}

In this section, we establish an isomorphism of categories between $\cdgl_{\leq 1}$
and a subcategory of crossed modules.
We use the Lie algebra crossed modules introduced by Kassel and Loday in \cite{MR694130}.
We begin with a reminder of \cite{MR694130}.

\medskip
In Definition~\ref{def:crossedmod}, the group action of $N$ on $M$
 corresponds to a homomorphism from $N$ in the group of automorphisms of $M$.
For Lie algebras, $\gn$ and $\gm$, an action of $\gn$ on $\gm$ corresponds 
to a Lie morphism  $\gv\colon\gn\to\Der(\gm)$  in the Lie algebra of derivations of $\gm$. 
The action of $n\in\gn$ on $m\in\gm$ is denoted $\gv(n).m$.
We can now state \cite[D\'efinition A.1]{MR694130}.

\begin{definition}\label{def:crossedlie}
A Lie algebra crossed module  is a morphism of Lie algebras, $\gu\colon \gm\to \gn$,
together with an action  $\gv\colon \gn\to \Der(\gm)$, satisfying two conditions:
 \begin{enumerate}
 \item for all $m\in \gm$ and $n\in \gn$, $\gu(\gv(n).m)=[n,\gu(m)]$,
 \item for all $m\in \gm$, $m'\in \gm$, $\gv(\gu(m)).m'=[m,m']$.
 \end{enumerate}
\end{definition}

We now introduce the ``rational'' versions of crossed modules.
If $G$ is a group, $G^k=[G,G^{k-1}]$ denotes the lower central series of $G$. 

\begin{definition}\label{def:malcev}\mbox{}
\begin{enumerate}
\item A group $G$ is a \emph{Malcev group} (or prounipotent rational group) if each $G^k/G^{k+1}$ is a $\Q$-vector space, $\dim G/G^2<\infty$ and
$G=\varprojlim_{{k}} G/G^k$.
\item A crossed module $d\colon M\to N$ is a \emph{Malcev crossed module} if $M$ and $N$ are Malcev groups
and the action of $N$ on $M$ satisfies
$({}^nm)m^{-1}\in M^{k+1}$ for all $m\in M^k$, $n\in N$.
\end{enumerate}
\end{definition}

If $\gm$ is a Lie algebra, $\gm^k=[\gm,\gm^{k-1}]$ denotes the lower central series of $\gm$.

\begin{definition}\label{def:pronilLie}\mbox{}
\begin{enumerate}
\item
A Lie algebra $\gm$ is \emph{pronilpotent} if $\dim \gm/\gm^2<\infty$ and $\gm=\varprojlim_{k}\gm/\gm^k$.
\item A Lie algebra crossed module $\gu\colon \gm\to \gn$ is \emph{pronilpotent} if $\gm$ and
 $\gn$ are pronilpotent Lie algebras
and  the action
$\gv\colon \gn\to \Der(\gm)$ satisfies
$\gv(\gn).\gm^k\subset \gm^{k+1}$.
\end{enumerate}
\end{definition}

\begin{remark}
The completion of a Lie algebra $\gm$ satisfying $\dim \gm/\gm^2<\infty$  is the Lie algebra 
$\widehat{\gm} = \lim_k \gm/\gm^k$. This is a pronilpotent Lie algebra since
$\widehat{\gm}= \lim_k \widehat{\gm}/\widehat{\gm}^k$. 
\end{remark}

If a Lie algebra $\gm$ acts on a vector space $V$, we denote by $V^k$ the sequence of subspaces
$V^0=V,\;V^k=\gm.V^{k-1}$.

\begin{definition}\label{def:pronildgc}
The action of $\gm$ on $V$ is \emph{pronilpotent} if $V=\varprojlim_{k}V^k$.
In particular, a cdgl  $L=L_{0}\oplus L_{1}$ is \emph{pronilpotent} if the Lie algebra $L_{0}$ is pronilpotent 
and if the adjoint action of $L_{0}$ on $L_{1}$ is pronilpotent.
\end{definition}

\begin{proof}[Proof of Theorem~\ref{thm:equivalence}]
We only define the correspondences for objects, the extension to morphisms being immediate.

(1) $\Rightarrow$ (2).
We start with a pronilpotent cdgl $L=L_{0}\oplus L_{1}$ and we construct a pronilpotent Lie algebra crossed module
$\gu\colon \gm\to \gn$ with action $\gv\colon \gn\to \Der(\gm)$. We denote $d$ the differential of $L$ and $[-,-]$ its
bracket.

We set $\gn=L_{0}$, $\gm=L_{1}$. The bracket on $\gn$ is the bracket  of $L_{0}$ and the bracket on
$\gm$ is defined by 
$$[a,b]'=[da,b], \text{ for } a,b\in L_{1}.$$
We check that $[-,-]'$ is an (ungraded) Lie bracket. Since  $[a,b]=0$, the antisymmetry follows from
$$0=d[a,b]=[da,b]+[db,a]=[a,b]'+[b,a]'.$$
The proof is similar for the Jacobi identity.
The morphism $\gu\colon \gm\to \gn$ is the differential~$d$; this is a Lie algebra morphism:
$$\gu([a,b]')=d[da,b]=[da,db]=[\gu(a),\gu(b)],\;\text{for all } a,\,b\in \gm.$$
The action $\gv\colon \gn\to \Der(\gm)$ is given by the adjoint action, $\gv(x)=\ad_{x}$. 
The formulae (1) and (2) of Definition~\ref{def:crossedlie} also follow immediately: 
let $a,\,b\in \gm=L_{1}$ and $x\in \gn=L_{0}$, 
we have
\begin{eqnarray*}
\gu(\gv(x).a)&=& d(\ad_{x}(a))=d[x,a]=[x,da]=[x,\gu(a)],\\
\gv(\gu(a)).b&=&\ad_{da}(b)=[da,b]=[a,b]'.
\end{eqnarray*}
By definition, since $L$ is pronilpotent the associated Lie algebra crossed module is also pronilpotent.

\medskip
(2) $\Rightarrow$ (1). 
We start with a pronilpotent Lie algebra crossed module
$\gu\colon \gm\to \gn$ with action $\gv\colon \gn\to \Der(\gm)$ and we construct a pronilpotent cdgl $L=L_{0}\oplus L_{1}$.
We define $L_{0}=\gn$ as Lie algebra and $L_{1}=\gm$ as vector space.
For $a\in L_{1}$ and $x\in L_{0}$, we set $[x,a]=\gv(x).a$ and $d=\gu$.
We check easily that $d$ is a derivation and $L=L_{0}\oplus L_{1}$ is pronilpotent.

The associations (1) $\Rightarrow$ (2) and (2) $\Rightarrow$ (1) give the desired isomorphism
 of categories for the two first points of the statement.

\medskip
(2) $\Rightarrow$ (3). 
We start with a pronilpotent Lie algebra crossed module
$\gu\colon \gm\to \gn$ with action $\gv\colon \gn\to \Der(\gm)$ and we construct a Malcev crossed module
$d\colon M\to N$. We define $M$ and $N$ to be the vector spaces $\gm$ and $\gn$ respectively, with the
group structure given by the Baker-Campbell-Hausdorff product, and set $d=\gu$. 
The action $\gv$ extends in an action by 
$e^{\gv}$: for $n\in N=\gn$, $m\in M=\gm$, we set
$${}^nm=e^{\gv(n)}(m).$$
As $\gv$ is a morphism of Lie algebras, we have $\gv[n,n']=[\gv(n),\gv(n')]$, for all $n,\,n'\in N$, and so,
 the Baker-Campbell-Hausdorff formula implies $\gv(n*n')=\gv(n)*\gv(n')$ and
 $${}^{(n*n')}m=e^{\gv(n*n')}(m)=e^{\gv(n)}(e^{\gv(n')}(m)).
$$
Thus, we have a group action. The two additional properties of Malcev crossed modules are easily deduced from
the corresponding properties of Lie algebra crossed modules as well as the pronilpotency conditions.

\medskip
(3) $\Rightarrow$ (2). 
As we do for the cases (1) and (2), the previous process can be reversed. 
We associate a pronilpotent Lie algebra to a Malcev group, replacing the exponential by the functor
$L\mapsto \log(1+L)$. 
The only significative point is the 
construction of the Lie algebra action $\gv\colon \gn\to \Der(\gm)$ from the group action $\nu\colon N\to \Aut(M)$;
this is done by
$$\gv(n).m=\log(1+\nu(n))(m).$$

\medskip
We end with the study of the composition (1) $\Rightarrow$ (2) $\Rightarrow$ (3). We start with $L=L_{0}\oplus L_{1}$
and in the step (2), we define a bracket on $L_{1}$ by $[a,b]'=[da,b]$. Then, in the second implication, we endow
$L_{1}$ with a group law coming from the Baker-Campbell-Hausdorff formula, $a*b=\log(e^ae^b)$. This formula
can be written
\begin{eqnarray*}
a*b
&=&
a+b+\frac{1}{2}[a,b]'+\frac{1}{12}[a,[a,b]']'- \frac{1}{12}b,[a,b]']' +\dots\\
&=&
a+b+\frac{1}{2}[da,b]+\frac{1}{12}[da,[da,b]]- \frac{1}{12}db,[da,b]] +\dots.
\end{eqnarray*}
This is exactly the expression of $a\perp b$ given in the formula (\ref{FF}).
We recover the group law on $L_{1}$ in $\cC(L)$. The rest of the verification is straightforward. 
\end{proof}

\providecommand{\bysame}{\leavevmode\hbox to3em{\hrulefill}\thinspace}
\providecommand{\MR}{\relax\ifhmode\unskip\space\fi MR }
\providecommand{\MRhref}[2]{%
  \href{http://www.ams.org/mathscinet-getitem?mr=#1}{#2}
}
\providecommand{\href}[2]{#2}

\end{document}